\documentclass[12pt]{amsart}
\usepackage{fullpage,amssymb,amsthm,amsmath,url,stmaryrd}
\usepackage[all]{xy}

\newtheorem{theorem}{Theorem}
\newtheorem{lemma}[theorem]{Lemma}
\newtheorem{prop}[theorem]{Proposition}
\theoremstyle{definition}
\newtheorem{hypothesis}[theorem]{Hypothesis}
\newtheorem{defn}[theorem]{Definition}
\newtheorem{example}[theorem]{Example}
\newtheorem{remark}[theorem]{Remark}
\numberwithin{theorem}{section}
\numberwithin{equation}{theorem}

\newcommand{\bA}{\mathbf{A}}
\newcommand{\be}{\mathbf{e}}
\newcommand{\bv}{\mathbf{v}}

\newcommand{\QQ}{\mathbb{Q}}
\newcommand{\RR}{\mathbb{R}}
\newcommand{\ZZ}{\mathbb{Z}}

\newcommand{\calF}{\mathcal{F}}

\newcommand{\calO}{\mathcal{O}}

\newcommand{\frakm}{\mathfrak{m}}
\newcommand{\frako}{\mathfrak{o}}
\newcommand{\frakp}{\mathfrak{p}}

\DeclareMathOperator{\Frac}{Frac}
\DeclareMathOperator{\Gr}{Gr}
\DeclareMathOperator{\rinf}{inf}
\DeclareMathOperator{\Mod}{\mathbf{Mod}}
\DeclareMathOperator{\ff}{ff}
\DeclareMathOperator{\Spa}{Spa}
\DeclareMathOperator{\Spec}{Spec}
\DeclareMathOperator{\Tor}{Tor}
\DeclareMathOperator{\Vect}{\mathbf{Vec}}

\begin{document}

\title{Some ring-theoretic properties of $\bA_{\rinf}$}
\author{Kiran S. Kedlaya}
\date{June 5, 2019}
\thanks{Supported by NSF (grant DMS-1501214), UC San Diego (Warschawski Professorship), Guggenheim Fellowship (fall 2015). Some of this work was carried out at MSRI during the fall 2014 research program ``New geometric methods in number theory and automorphic forms'' supported by NSF grant DMS-0932078. Thanks to Jaclyn Lang, Judith Ludwig, and Peter Scholze for additional feedback.}

\begin{abstract}
The ring of Witt vectors over a perfect valuation ring of characteristic $p$, often denoted $\bA_{\rinf}$, plays a pivotal role in $p$-adic Hodge theory; for instance, Bhatt--Morrow--Scholze have recently reinterpreted and refined the crystalline comparison isomorphism by relating it to a certain $\bA_{\rinf}$-valued cohomology theory. We address some basic ring-theoretic questions about $\bA_{\rinf}$, motivated by analogies with two-dimensional regular local rings. For example, we show that in most cases $\bA_{\rinf}$, which is manifestly not noetherian, is also not coherent. On the other hand, it does have the property that vector bundles over the complement of the closed point in $\Spec \bA_{\rinf}$ do extend uniquely over the puncture; moreover, a similar statement holds in Huber's category of adic spaces.
\end{abstract}

\maketitle

Throughout this paper, let $K$ be a perfect field of characteristic $p$ equipped with a nontrivial valuation $v$ (written additively), e.g., the perfect closure of $\mathbb{F}_p((t))$ with the $t$-adic valuation. 
(Note that $K = \mathbb{F}_p$ is excluded by the nontriviality condition.)
Unless otherwise specified, we do not assume that $K$ is complete.

A fundamental role is played in $p$-adic Hodge theory by the ring
$\bA_{\rinf} := W(\frako_K)$, where $\frako_K$ denotes the valuation ring of $K$ and $W$ denotes the functor of $p$-typical Witt vectors. The ring $\bA_{\rinf}$ serves as the basis for Fontaine's construction of $p$-adic period rings and the ensuing analysis of comparison isomorphisms. Recently,
Fargues has used $\bA_{\rinf}$ to give a new description of crystalline representations
via a variant of Breuil--Kisin modules \cite{fargues},
while Bhatt--Morrow--Scholze have described the crystalline comparison isomorphism via a direct construction of these modules \cite{bhatt-morrow-scholze}.

We discuss several issues germane to \cite{bhatt-morrow-scholze} regarding ring-theoretic properties of $\bA_{\rinf}$, particularly those related to the analogy between $\bA_{\rinf}$ and two-dimensional regular local rings. 
In the negative direction, the ring $\bA_{\rinf}$ is typically not coherent (Theorem~\ref{T:not coherent}); in the positive direction, vector bundles over the complement of the closed point in $\Spec(\bA_{\rinf})$ extend over the puncture
(Theorem~\ref{T:algebraic glueing local}), and similarly if the Zariski spectrum is replaced by the Huber adic spectrum (Theorem~\ref{T:adic glueing local}). 

We also discuss briefly some related questions in the case where $K$ is replaced by a more general nonarchimedean Banach ring. These are expected to pertain to a hypothetical relative version of the results of \cite{bhatt-morrow-scholze}. 

\section{Finite generation properties}
\label{sec:modules Witt rings}

\begin{defn}
A ring is \emph{coherent} if every finitely generated ideal is finitely presented. Note that an integral domain is coherent if and only if 
the intersection of any two finitely generated ideals is again finitely generated
\cite{chase}. 
\end{defn}

A result of Anderson--Watkins \cite{anderson-watkins}, building on work of J\o ndrup--Small \cite{jondrup-small} and Vasconcelos \cite{vasconcelos} (see also \cite[Theorem~8.1.9]{graz}), asserts that a power series ring over a nondiscrete valuation ring can never be coherent except possibly if the value group is isomorphic to $\RR$. Using a similar technique, we have the following. 

\begin{theorem} \label{T:not coherent}
Suppose that the value group of $K$ is not isomorphic to $\RR$. Then $\bA_{\rinf}$ is not coherent.
\end{theorem}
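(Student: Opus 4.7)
Since $\bA_{\rinf}$ is an integral domain, by the criterion of Chase recalled above, it suffices to exhibit elements $a, b \in \bA_{\rinf}$ for which $a \bA_{\rinf} \cap b \bA_{\rinf}$ is not finitely generated. The plan is to adapt the Anderson--Watkins technique for the power series ring $\frako_K[[T]]$ to the Witt vector setting, with $p$ playing the role analogous to the indeterminate $T$.

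To exploit the hypothesis, first note that since $K$ is perfect of characteristic $p$, the value group $\Gamma = v(K^\times)$ is $p$-divisible. We may assume the valuation is nontrivial, as otherwise $\bA_{\rinf} = W(K)$ is a discrete valuation ring and coherent; then $\Gamma$ is dense. The condition $\Gamma \not\cong \RR$ means $\Gamma$ is not Dedekind complete: there exists a strictly decreasing sequence $\gamma_1 > \gamma_2 > \cdots$ in $\Gamma_{>0}$ whose infimum $\gamma^*$ (taken in $\RR$ if $\Gamma$ is archimedean, and in a suitable order completion otherwise) is not attained in $\Gamma$. Lift to elements $\pi_n \in \frakm_K$ with $v(\pi_n) = \gamma_n$, so that the principal ideals $[\pi_n] \bA_{\rinf}$ form a strictly increasing chain.

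The test elements $a, b$ are to be built from a Teichm\"uller lift together with a $p$-adically convergent series assembled from the $[\pi_n]$ with appropriate $p$-power factors, arranged so that membership in $a \bA_{\rinf} \cap b \bA_{\rinf}$ is controlled, at each $p$-adic precision, by a divisibility condition linked to $\gamma_n$. Assuming for contradiction that the intersection were generated by $x_1, \ldots, x_r$, each $x_i$ would carry only a fixed bound $\gamma_{N_i}$ on the ``valuations it can witness''; but the intersection will contain explicit elements requiring divisibility witnesses of arbitrarily small valuations approaching $\gamma^*$, contradicting finite generation. Conversely, when $\Gamma \cong \RR$, the infimum $\gamma^*$ is attained in $\Gamma$ and a Weierstrass-style preparation would convert the second factor into a polynomial in $p$ up to a unit, making such an intersection finitely generated; the use of the hypothesis is therefore genuine.

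The main obstacle is Witt vector arithmetic: addition in $\bA_{\rinf}$ does not act coefficient-wise on Teichm\"uller expansions, so one cannot directly read off ``coefficients'' of sums and products. The workaround is to argue modulo successive powers of $p$ using the description of $\bA_{\rinf}/p^k$ via truncated Witt vectors, together with the multiplicativity $[x][y] = [xy]$ and the transparency of reduction mod $p$ (since $\bA_{\rinf}/p = \frako_K$ is a valuation domain). The most delicate step will be confirming that the infinite family of elements produced from the $\gamma_n$ remains genuinely independent modulo the Witt carry operations, rather than being redundantly implied by finitely many of its predecessors.
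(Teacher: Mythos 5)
Your overall strategy is the same as the paper's: invoke Chase's criterion, take one generator to be a Teichm\"uller lift and the other a series $\sum_n p^n[\overline{x}_n]$ with $p$ playing the role of the Anderson--Watkins indeterminate, and exploit a valuation "cut" not attained in $v(K^\times)$. But as written this is a plan, not a proof: the elements $a,b$ are never actually specified, the characterization of $a\bA_{\rinf}\cap b\bA_{\rinf}$ is never carried out, and the step you yourself flag as "the most delicate" --- controlling divisibility in $\bA_{\rinf}$ despite Witt-vector carries --- is exactly the core of the argument and is left unresolved. Working modulo $p^k$ with truncated Witt vectors does not by itself tame the carries; the paper's resolution is to invoke the theory of Newton polygons for elements of $W(\frako_K)$ (as in Kedlaya--Liu, Definition~4.2.8), which converts divisibility of $h$ by $g$ into the statement that the Newton polygon of $h$ contains all slopes of that of $g$, hence has total width at least $r=\lim_n v(\overline{x}_n)$. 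Combined with divisibility by $[\overline{x}_0]$, this pins down the image of the intersection under reduction mod $p$ (i.e., $h\mapsto \overline{h}_0$) as exactly $\{y\in\frako_K: v(y)\ge 2v(\overline{x}_0)-r\}$ with $2v(\overline{x}_0)-r\notin v(\frako_K)$; since the image of a finitely generated ideal under a ring surjection is finitely generated, this ideal of $\frako_K$ being non-finitely-generated finishes the proof. Your closing argument about generators $x_1,\dots,x_r$ each "witnessing" only boundedly many valuations is a vaguer version of this and would, once made precise, require the same divisibility analysis.

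Two smaller points. First, in the paper's archimedean construction the sequence $v(\overline{x}_n)$ must be chosen with $v(\overline{x}_0/\overline{x}_1)>v(\overline{x}_1/\overline{x}_2)>\cdots$ so that the points $(n,v(\overline{x}_n))$ already lie on their lower convex hull and the Newton polygon of $g$ is transparent; an arbitrary decreasing sequence with irrational limit does not suffice, and your sketch omits this normalization. Second, your uniform "Dedekind cut" framing glosses over the non-archimedean case, which the paper handles separately by choosing $\overline{x}$ divisible by $\overline{y}^n$ for all $n$ and a series whose Newton polygon has divergent total width; your "suitable order completion" would need to be made concrete in essentially this way. None of these are wrong ideas, but the proof is not yet there.
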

\begin{proof}
It suffices to exhibit elements $f,g \in \bA_{\rinf}$ such that $(f) \cap (g)$ is not finitely generated. 
Suppose first that the value group of $K$ is archimedean, i.e., the valuation $v$ can be taken to have values in $\RR$.
Since $K$ is perfect, its value group cannot be discrete, and hence must be dense in $\RR$. We can thus choose elements
$\overline{x}_0, \overline{x}_1, \ldots \in \frako_K$ such that
$v(\overline{x}_0), v(\overline{x}_1), \dots$ is a decreasing sequence with positive limit $r \notin v(\frako_K)$ and
$v(\overline{x}_0/\overline{x}_1) > v(\overline{x}_1/\overline{x}_2) > \cdots$.
Put $f := [\overline{x}_0]$ and $g := \sum_{n=0}^\infty p^n [\overline{x}_n]$.

Recall that the ring $\bA_{\rinf}$ admits a theory of \emph{Newton polygons} analogous to the corresponding theory for polynomials or power series over a valuation ring; see \cite[Definition~4.2.8]{kl1} for details. To form the Newton polygon of $g$, we take the lower convex hull of the set $\{(n, v(\overline{x}_n)): n=0,1,\dots\}$ in $\RR^2$; the slopes of this polygon are equal to
$-v(\overline{x}_n/\overline{x}_{n+1})$ for $n=0,1,\dots$.
If $h = \sum_{n=0}^\infty p^n [\overline{h}_n] \in \bA_{\rinf}$
is divisible by both $f$ and $g$, then 
on one hand, we have $h/f= \sum_{n=0}^\infty p^n [\overline{h}_n/\overline{x}_0]$,
so $v(\overline{h}_n) \geq v(\overline{x}_0)$ for all $n$;
on the other hand, the Newton polygon of $h$ must include all of the slopes of the Newton polygon of $g$, so its total width must be at least $r$. It follows that 
$v(\overline{h}_0) \geq 2v(\overline{x}_0) - r$.

Conversely, any $\overline{h}_0 \in \frako_K$ with $v(\overline{h}_0) \geq 2v(\overline{x}_0) - r$ extends to some $h \in \bA_{\rinf}$ divisible by both $f$ and $g$, e.g., by taking $h = g [\overline{h}_0]/[\overline{x}_0]$. Since $2v(\overline{x}_0) - r \notin v(\frako_K)$, it follows that the image of $(f) \cap (g)$ in $\frako_K$ is an ideal which is not finitely generated; consequently, $(f) \cap (g)$ itself cannot be finitely generated.

Suppose next that the value group of $K$ is not archimedean. We can then choose
some nonzero $\overline{x}, \overline{y} \in \frako_K$ such that for every positive integer  $n$, $\overline{x}$ is divisible by $\overline{y}^n$ in $\frako_K$.
Let $r_1, r_2, \dots$ be a decreasing sequence of elements of $\ZZ[p^{-1}]_{>0}$
whose sum diverges.
Put $f := [\overline{x}]$ and
$g := \sum_{n=0}^\infty p^n [\overline{x}/\overline{y}^{r_1 + \cdots + r_n}]$.
As above, we see that if $h = \sum_{n=0}^\infty p^n [\overline{h}_n] \in \bA_{\rinf}$ is divisible by both $f$ and $g$, then on one hand, we have $v(\overline{h}_n) \geq v(\overline{x})$ for each $n$; on the other hand, the Newton polygon of $h$
includes all of the slopes of the Newton polygon of $g$, so
its total width must exceed $r_1 + \dots + r_n$ for each $n$.
It follows that $v(\overline{h}_0) \geq v(\overline{x}) + nv(\overline{y})$ for every positive integer $n$; conversely, any $\overline{h}_0$ with this property occurs this way for $h = g[\overline{h}_0]/[\overline{x}]$. Again, this means that $(f) \cap (g)$ maps to an ideal of $\frako_K$ which is not finitely generated, so $(f) \cap (g)$ cannot itself be finitely generated.
\end{proof}

\begin{remark}
It is unclear whether the ring $\bA_{\rinf}$ fails to be coherent even if the value group of $K$ equals $\RR$, especially if we also assume that $K$ is spherically complete.
It is also unclear whether the ring $\bA_{\rinf}[p^{-1}]$ is coherent.
By contrast, with no restrictions on $K$, for every positive integer $n$ the quotient $\bA_{\rinf}/(p^n)$ is coherent \cite[Proposition~3.24]{bhatt-morrow-scholze}.
\end{remark}

\begin{remark}
Let $\frakm_K$ be the maximal ideal of $K$. In order to apply the formalism of almost
ring theory (e.g., as developed in \cite{gabber-ramero}) to the ring $\bA_{\rinf}$,
it would be useful to know that the ideal $W(\frakm_K)$ of $\bA_{\rinf}$ has the property that $W(\frakm_K) \otimes_{\bA_{\rinf}} W(\frakm_K) \to W(\frakm_K)$ is an isomorphism. We do not know whether this holds in general; 
for example, to prove that this map fails to be surjective, one would have to produce an element of $W(\frakm_K)$ which cannot be written as a finite sum of pairwise products, and we do not have a mechanism in mind for precluding the existence of such a presentation. An easier task is to produce elements of $W(\frakm_K)$ not lying in the image of the multiplication map $W(\frakm_K) \times W(\frakm_K) \to W(\frakm_K)$, as in the following example communicated to us by Peter Scholze.
\end{remark}

\begin{example}
Suppose that $v(K^\times) = \QQ$. 
We first construct a sequence $r_1, r_2, \dots$ of positive elements of $\QQ$ with sum 1 such that every infinite subsequence with infinite complement has irrational sum. To this end, take a sequence $1 = s_0, s_1, s_2, \dots$
converging to 0 sufficiently rapidly (e.g., doubly exponentially) and put $r_1 = s_0 - s_1, r_2 = s_1 - s_2, \dots$; any infinite subsequence with infinite complement can be regrouped into sums of consecutive terms, yielding another infinite sequence with rapid decay, and Liouville's criterion implies that the sum of the subsequence is irrational (and even transcendental).

Now choose $x = \sum_{n=0}^\infty p^n [\overline{x}_n] \in W(\frakm_K)$ 
with $v(\overline{x}_n) = s_n$; we check that $x \neq yz$ for all $y,z \in W(\frakm_K)$.
If the equality $x=yz$ were to hold, the Newton polygons of $y$ and $z$ together would comprise the Newton polygon of $x$; that is, each slope occurs in $xy$ with multiplicity equal to the sum of its multiplicities in the Newton polygons of $x$ and $y$. Due to the irrationality statement of the previous paragraph, this is impossible if both $y$ and $z$ have infinitely many slopes; consequently, one of the factors, say $y$, has only finitely many slopes in its Newton polygon. On the other hand, if $y = \sum_{n=0}^\infty p^n [\overline{y}_n]$, there cannot exist $c>0$ such that $v(\overline{y}_n) \geq c$ for all $n$, as otherwise we would also have $v(\overline{x}_n) \geq c$ for all $n$.
Putting these two facts together, we deduce that $v(\overline{y}_n) = 0$ for some $n$, a contradiction.
\end{example}

The following related remark was suggested by Bhargav Bhatt.
\begin{remark}
Suppose that the value group of $K$ is archimedean.
Consider the following chain of strict inclusions of ideals:
\[
0 \subset \bigcup_{\varpi \in \frakm_K} [\varpi] \bA_{\rinf} \subset W(\frakm_K) \subset (p) + W(\frakm_K)
\]
The quotients by the ideals $W(\frakm_K)$ and $(p) + W(\frakm_K)$ are the integral domains
$W(\kappa)$ and $\kappa$, where $\kappa := \frako_K/\frakm_K$ is the residue field of $K$; hence these two ideals
are prime. The ideal $\bigcup_{\varpi \in \frakm_K} [\varpi] \bA_{\rinf}$ is also prime:
it contains $x = \sum_{n=0}^\infty p^n [\overline{x}_n]$ if and only if the total multiplicity of all slopes in the Newton polygon of $x$ is strictly less than $v(\overline{x}_0)$.

The previous argument shows that the global (Krull) dimension of $\bA_{\rinf}$ is at least 3. 
In fact, one can push this further: by adapting a construction of Arnold
\cite{arnold} that produces arbitrary long chains of prime ideals within
the ring of formal power series over a nondiscrete valuation ring,
Lang--Ludwig \cite{lang-ludwig} have shown that $\bA_{\rinf}$ has infinite Krull dimension.
\end{remark}

\section{Vector bundles}
\label{sec:algebraic glueing local}

Recall that for $A$ a two-dimensional regular local ring, the restriction functor from vector bundles on $\Spec A$ (i.e., finite free $A$-modules) to vector bundles on the complement of the closed point is an equivalence of categories. This is usually shown by using the fact that a reflexive module has depth at least 2 \cite[Tag~0AVA]{stacks-project} in conjunction with the Auslander--Buchsbaum formula \cite[Tag~090U]{stacks-project} to see that every reflexive $A$-module is projective.

During the course of Scholze's 2014 Berkeley lectures documented in \cite{s-berkeley}, we explained to him an alternate proof applicable to the case of $\bA_{\rinf}$; this argument appears as \cite[Theorem~14.2.1]{s-berkeley},
and a similar argument is given in \cite[Lemma~4.6]{bhatt-morrow-scholze}. Here, we give a general version of this proof applicable in a variety of cases, which identifies the most essential hypotheses on the ring $A$.

\begin{hypothesis}
Throughout \S\ref{sec:algebraic glueing local},
let $A$ be a local ring whose maximal ideal $\frakp$ contains a non-zero-divisor $\pi$ such that $\frako := A/(\pi)$ is (reduced and) a valuation ring with maximal ideal $\frakm$. Put $L := \Frac \frako$; in the case $A = \bA_{\rinf}$ we have $L = K$.
\end{hypothesis}

\begin{defn} \label{D:valuation field}
Put $X := \Spec(A)$, $Y := X \setminus \{\frakp\}$, and $U := \Spec(A[\pi^{-1}]) \subset X$.
Let $B$ be the $\pi$-adic completion of $A_{(\pi)}$; note that within $B[\pi^{-1}]$ we have
\begin{equation} \label{eq:ring intersect}
A[\pi^{-1}] \cap B = A.
\end{equation}
Let $Z$ be the algebraic stack which is the colimit of the diagram
\[
\Spec(A[\pi^{-1}]) \leftarrow  \Spec(B[\pi^{-1}]) \rightarrow
\Spec(B).
\]
\end{defn}

\begin{lemma} \label{L:algebraic glueing1 local}
For $* \in \{X,Y,Z\}$, let $\Vect_*$ denote the category of vector bundles on $*$.
\begin{enumerate}
\item[(a)]
The pullback functor 
$\Vect_{X} \to \Vect_{Y}$ is 
fully faithful.
\item[(b)]
The pullback functor 
$\Vect_{Y} \to \Vect_{Z}$ is 
fully faithful.
\item[(c)]
For $\calF \in \Vect_*$ and $M := H^0(*, \calF)$, the adjunction morphism
$\tilde{M}|_* \to \calF$ is an isomorphism.
\end{enumerate}
\end{lemma}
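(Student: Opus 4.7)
We prove the three parts in the order (c), (a), (b). Part (c) is established for each $*\in\{X,Y,Z\}$ using the intersection identity \eqref{eq:ring intersect} as the main input, after which (a) and (b) follow formally via an internal Hom argument. The case $*=X$ of (c) is tautological.

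For (c) with $*=Z$, we unfold the colimit description: a vector bundle $\calF\in\Vect_Z$ is a descent datum, namely a triple $(M_1,M_2,\phi)$ where $M_1$ is finite projective over $A[\pi^{-1}]$, $M_2$ is finite projective over $B$, and $\phi$ is an isomorphism between their base changes to $B[\pi^{-1}]$; writing $M_{12}$ for this common $B[\pi^{-1}]$-module, one has $M:=H^0(Z,\calF)=M_1\times_{M_{12}}M_2$, naturally an $A$-module thanks to \eqref{eq:ring intersect}. We then verify that the canonical maps $M\otimes_A A[\pi^{-1}]\to M_1$ and $M\otimes_A B\to M_2$ are both isomorphisms. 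The first is elementary: $M$ is $\pi$-torsion-free (as both $M_1$ and $M_2$ are), so $M\otimes_A A[\pi^{-1}]=M[\pi^{-1}]$; the map $M[\pi^{-1}]\to M_1$ is injective by $\pi$-torsion-freeness and surjective because $M_{12}=M_2[\pi^{-1}]$ by finite generation of $M_2$, which lets one clear denominators to produce preimages in $M$. The second isomorphism is a Beauville--Laszlo style descent and is the main obstacle; it requires exploiting the $\pi$-adic completeness of $M_2$, likely by comparing modulo $\pi^n$ and passing to the limit.

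For (c) with $*=Y$, we leverage the factorization $Z\to Y\subset X$. Given $\calF\in\Vect_Y$, the restriction $\calF|_Z$ is a vector bundle on $Z$, so the $Z$-case identifies $\calF|_Z\cong\tilde{M'}|_Z$ for $M':=H^0(Z,\calF|_Z)$. The key substep is to show $M'=H^0(Y,\calF)$: injectivity of the restriction map $H^0(Y,\calF)\to H^0(Z,\calF|_Z)$ follows from $\pi$-torsion-freeness of $\calF$ combined with the inclusion $\Spec A[\pi^{-1}]\subset Y$ (which also embeds into $Z$), while surjectivity is obtained by exhibiting, for each descent-compatible pair of sections over $\Spec A[\pi^{-1}]$ and $\Spec B$, a global section on $Y$ using the open cover of $Y$ by $\Spec A[\pi^{-1}]$ together with the principal opens $D(f)\cap Y$ for $f\in\frakp\setminus(\pi)$. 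Having identified $M'$, the isomorphism $\tilde{M'}|_Y\cong\calF$ is checked on this same cover: on $\Spec A[\pi^{-1}]$ it follows from the $Z$-case, and on $D(f)\cap Y$ it follows from local triviality of $\calF$ combined with the compatibility already verified over $Z$.

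Parts (a) and (b) are formal consequences of (c). For finite projective $A$-modules $M,N$, the sheaf $(\operatorname{Hom}_A(M,N))^\sim|_Y$ represents $\calO_Y$-module homomorphisms $\tilde M|_Y\to\tilde N|_Y$ (using finite presentation of $M$); taking global sections and applying (c) for $Y$ yields $\operatorname{Hom}_Y(\tilde M|_Y,\tilde N|_Y)=\operatorname{Hom}_A(M,N)=\operatorname{Hom}_X(\tilde M,\tilde N)$, which is (a). Part (b) is identical with $Z$ in place of $Y$. The main obstacle is the $M\otimes_A B\cong M_2$ step in the $Z$-case of (c): without flatness of $A\to B$ one cannot naively tensor, and we expect the argument to reduce via the truncations $A/(\pi^n)\to B/(\pi^n)$ together with $\pi$-adic completeness of $M_2$ to a finite-level descent that can be handled directly.
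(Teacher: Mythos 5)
Two steps in your proposal are genuine gaps. First, the deduction of (a) and (b) from (c) conflates the two adjunction maps. Part (c) controls the counit $\tilde{M}|_* \to \calF$ for $M = H^0(*,\calF)$; full faithfulness instead requires the unit, i.e., that $P \to H^0(*,\tilde{P}|_*)$ is bijective for $P$ finite projective over $A$ (applied to $P = \operatorname{Hom}_A(M,N)$), and for (b) the analogous comparison of $H^0(Y,-)$ with $H^0(Z,-)$ for a bundle that lives only on $Y$ and is not yet known to come from $X$. Knowing that $\tilde{P}|_*$ and $\widetilde{H^0(*,\tilde{P}|_*)}|_*$ are isomorphic as sheaves does not identify the modules (the unit could be a non-surjective map of $A$-modules inducing an isomorphism away from $\frakp$). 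This is precisely where the ring-theoretic input must enter: the paper proves (a) by showing $H^0(X,\calO)=H^0(Y,\calO)=H^0(Z,\calO)=A$ via \eqref{eq:ring intersect}, and (b) via the localized identity $A[z^{-1},\pi^{-1}]\cap B = A[z^{-1}]$ for $z$ a lift of a nonzero element of $\frakm$ (the opens $\Spec A[\pi^{-1}]$ and $\Spec A[z^{-1}]$ covering $Y$). Your write-up never establishes these statements, so (a) and (b) are not actually proved.

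Second, for (c) with $*=Z$ you correctly isolate the surjectivity of $M\otimes_A B\to M_2$ as the crux, but you leave it as a conjecture (``likely,'' ``we expect'') rather than an argument, and the suggested route through $\pi$-adic completeness does not obviously close it. Here $B$ is a complete discrete valuation ring with residue field $\Frac\frako$, so by Nakayama the whole difficulty sits at the single step of showing that $M$ surjects onto the fibre $M_2/\pi M_2$; clearing denominators from generators of $M_1$ only shows the image contains $\pi^k M_2$ for some $k$, and completeness gives no further purchase. Producing sections of $\calF$ that generate this fibre is exactly the hard content that the paper's subsequent Lemmas~\ref{L:valuation lattice}, \ref{L:transfer generators local} and \ref{L:algebraic glueing3 local} are designed to supply, so it should not be expected to fall out of a formal finite-level descent. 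Your treatment of $*=Y$ compounds the problem: routing it through $Z$ both inherits this gap and adds the unproven claim $H^0(Y,\calF)=H^0(Z,\calF|_Z)$, whereas the case $*=Y$ can be handled directly on the affine cover of $Y$ by standard quasi-coherence arguments, as the paper does.
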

\begin{proof}
For convenience, we write $\calO$ instead of $\calO_*$ hereafter.
To deduce (a), note that by \eqref{eq:ring intersect},
\[
H^0(X, \calO) = H^0(Y, \calO) = H^0(Z, \calO) = A.
\]
To deduce (b), choose $z \in A$ whose image in $A/(\pi)$ is a nonzero element of $\frakm$, so that 
\[
\Spec(A) = U \cup V, \qquad  V := \Spec(A[z^{-1})];
\]
then note that $z$ is invertible in $B$, and within $B[\pi^{-1}]$ we have
\[
A[z^{-1}, \pi^{-1}] \cap B = A[z^{-1}].
\]
To deduce (c), note that in case $* = Y$, the injectivity of the maps
\[
H^0(U, \calO) \to H^0(U \cap V, \calO), \qquad H^0(V, \calO) \to H^0(U \cap V, \calO)
\]
implies the injectivity of the maps
\[
H^0(U, \calF) \to H^0(U \cap V, \calF), \qquad H^0(V, \calF) \to H^0(U \cap V, \calF)
\]
and hence the injectivity of the maps
\[
M \to H^0(U, \calF), \qquad M \to H^0(V, \calF).
\]
It follows easily that the maps 
\[
M \otimes_{R} H^0(U, \calO) \to H^0(U, \calF),
\qquad
M \otimes_{R} H^0(V, \calO) \to H^0(V, \calF)
\]
are isomorphisms. The case $*=Z$ is similar.
\end{proof}

The following lemma is taken from \cite[Lemma~14.2.3]{s-berkeley}.
\begin{lemma} \label{L:valuation lattice}
Let $\kappa$ be the residue field of $A$, which is also the residue field of $\frako$.
Let $d$ be a nonnegative integer.
Let $N$ be an $\frako$-submodule of $L^d$. Then $\dim_{\kappa}(N \otimes_{\frako} \kappa) \leq d$, with equality if and only if $N$ is a free module of rank $d$.
\end{lemma}
\begin{proof}
By induction on $d$, we reduce to the case $d=1$. We then see that $\dim_{\kappa}(N \otimes_{\frako} \kappa)$ equals 1 if the set of valuations of elements of $N$ has a least element, in which case $N$ is free of rank 1, and 0 otherwise.
\end{proof}

\begin{lemma} \label{L:transfer generators local}
For $\calF \in \Vect_Z$ of rank $d$, if the elements
$\bv_1,\dots,\bv_d \in H^0(Z, \calF)$ generate both $H^0(U, \calF)$
and $H^0(\Spec(L), \calF)$, then they also generate $M := H^0(Z, \calF)$. 
\end{lemma}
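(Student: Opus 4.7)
The plan is to exploit the fact that $B$ is a local ring with residue field $\Frac \frako$, so the restriction of $\calF$ to $\Spec B$ is a free $B$-module of rank $d$. The hypothesis at $\Frac \frako$ will upgrade $\bv_1, \dots, \bv_d$ from generators to a $B$-basis of $H^0(\Spec B, \calF)$; combined with the generation hypothesis on $\Spec A[\pi^{-1}]$ and the intersection identity \eqref{eq:ring intersect}, this forces every section over $Z$ to lie in the $A$-span of the $\bv_i$.

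Set $M_1 := H^0(\Spec A[\pi^{-1}], \calF)$, $M_2 := H^0(\Spec B, \calF)$, and $M_3 := H^0(\Spec B[\pi^{-1}], \calF)$; the colimit description of $Z$ identifies $H^0(Z, \calF)$ with the fiber product $M_1 \times_{M_3} M_2$. I first observe that $B$ is local with maximal ideal $\pi B$: any $b \in B$ outside $\pi B$ has nonzero image in $B/\pi B = \Frac \frako$, so admits an inverse modulo $\pi$, which lifts to a genuine inverse by $\pi$-adic completeness. Hence $M_2$, being a finitely generated projective $B$-module of rank $d$, is free of rank $d$.

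Next I would promote $\bv_1, \dots, \bv_d$ to a $B$-basis of $M_2$. Their images in $M_2 \otimes_B \Frac \frako = H^0(\Frac \frako, \calF)$ generate a $d$-dimensional space, so form a basis there. Choosing any $B$-basis of $M_2$ and writing the $\bv_i$ as columns of a matrix $\Phi \in M_d(B)$, the reduction $\Phi \bmod \pi$ is invertible over $\Frac \frako$, so $\det \Phi$ has nonzero image there and is therefore a unit in $B$; thus $\Phi$ is invertible and the $\bv_i$ form a $B$-basis of $M_2$. In particular they are $B[\pi^{-1}]$-linearly independent in $M_3 = M_2[\pi^{-1}]$.

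To conclude, let $s \in H^0(Z, \calF)$ correspond to $(s_1, s_2) \in M_1 \times_{M_3} M_2$. Expand $s_1 = \sum_i a_i \bv_i$ with $a_i \in A[\pi^{-1}]$ using the generation hypothesis on $M_1$, and $s_2 = \sum_i b_i \bv_i$ with $b_i \in B$ using the $B$-basis property on $M_2$. Equating restrictions in $M_3$ and using linear independence of the $\bv_i$ there forces $a_i = b_i$ in $B[\pi^{-1}]$, and then \eqref{eq:ring intersect} gives $a_i \in A[\pi^{-1}] \cap B = A$. The only substantive step is the generator-to-basis upgrade over $B$; everything else is bookkeeping with the pullback square and the intersection identity.
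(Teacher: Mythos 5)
Your proof is correct, and it reaches the same destination as the paper's by a somewhat different route. The paper expands $\bv = \sum_i r_i \bv_i$ over $A[\pi^{-1}]$ and runs a descending induction on the least $m$ with $\pi^m r_i \in A$: if $m>0$, reducing modulo $\pi$ and using that the $\bv_i$ form a basis of $H^0(\Frac \frako, \calF)$ shows each $\pi^m r_i$ is divisible by $\pi$ in $A$, so $m$ can be decreased. You instead make explicit two facts the paper leaves implicit: that $B$ is local with maximal ideal $\pi B$ and residue field $\Frac \frako$, so that the hypothesis at $\Frac \frako$ upgrades, via the determinant/Nakayama argument, to the statement that the $\bv_i$ are a $B$-basis of $H^0(\Spec B, \calF)$; and that $H^0(Z,\calF)$ is the fiber product of the sections over the three charts. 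With those in hand the conclusion drops out of \eqref{eq:ring intersect} in one step rather than by induction. Your version is arguably more structural --- it isolates exactly where the hypothesis at $\Frac\frako$ enters and why it propagates over all of $B$ --- at the cost of having to verify the locality and $\pi$-adic separatedness of $B$ and the fiber-product description of $H^0(Z,-)$; the paper's induction works entirely inside $M$ and $A$ and needs only the reduction map to $\Frac\frako$ (though its step ``divisible by $\pi$ in $A$'' secretly uses $A \cap \pi B = \pi A$, i.e., the same identity \eqref{eq:ring intersect} you invoke). Both arguments ultimately rest on the same two inputs: the basis property at the residue field of $B$ and the intersection identity.
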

\begin{proof}
Choose any $\bv \in M$. Since $\bv_1,\dots,\bv_d$ generate $H^0(U, \calF)$, there exists a unique tuple $(r_1,\dots,r_d)$ over $A[\pi^{-1}]$ such that $\bv = \sum_{i=1}^d r_i \bv_i$.
In particular, there exists a nonnegative integer $m$ such that $\pi^m r_1,\dots, \pi^m r_d \in A$. If $m>0$, then $\pi^m \bv$ is divisible by $\pi$ in $M$, so it maps to zero in $H^0(\Spec(L), \calF)$. Since $\bv_1,\dots,\bv_d$ form a basis of this module,
$\pi^m r_1,\dots,\pi^m r_d$ must be divisible by $\pi$ in $A$ and so $\pi^{m-1}r_1,\dots,\pi^{m-1}r_d \in A$. By induction, we deduce that $r_1,\dots,r_d \in A$. This proves the claim.
\end{proof}

\begin{lemma} \label{L:algebraic glueing3 local}
For $\calF \in \Vect_Z$ of rank $d$, the module $M := H^0(Z, \calF)$
is free of rank $d$ over $A$.
\end{lemma}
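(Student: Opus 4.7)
My plan is to construct $d$ elements of $M = H^0(Z, \calF)$ that satisfy the hypotheses of Lemma~\ref{L:transfer generators local}; these will then be $A$-linearly independent (inherited from $B$-linear independence in $\calF|_{\Spec B}$) and hence form an $A$-basis of $M$.

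First I would note that $B$ is a discrete valuation ring with uniformizer $\pi$: it is local (as the $\pi$-adic completion of the local ring $A_{(\pi)}$, whose maximal ideal is $\pi A_{(\pi)}$), with residue field $\Frac \frako$, and $\pi$-adically separated. Hence $\calF|_{\Spec B}$ is free of rank $d$; write $\calF_2 := H^0(\Spec B, \calF) \cong B^d$ and $\calF_1 := H^0(\Spec A[\pi^{-1}], \calF)$. Since $\calF_2$ embeds into $\calF_{12} := \calF_1 \otimes_{A[\pi^{-1}]} B[\pi^{-1}]$, the projection from the fiber product $M = \calF_1 \times_{\calF_{12}} \calF_2$ to $\calF_2$ is injective, identifying $M$ with a sub-$A$-module of $B^d$. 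Moreover, the reduction map $M/\pi M \to \calF_2/\pi \calF_2 \cong (\Frac \frako)^d$ is injective: if $v \in M$ and $v \in \pi \calF_2$, then $v/\pi \in \calF_2$ and $v/\pi \in \calF_1$ (as $\pi$ is a unit in $A[\pi^{-1}]$), so $v/\pi \in M$. The image $M'$ is an $\frako$-submodule of $(\Frac \frako)^d$ to which Lemma~\ref{L:valuation lattice} applies.

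The main step is to construct $d$ elements of $M$ whose reductions form a $\Frac \frako$-basis of $(\Frac \frako)^d$. Take a finite $A[\pi^{-1}]$-generating set $u_1, \ldots, u_n$ of $\calF_1$. Using the DVR structure on $B$ (so that every nonzero element of $\calF_{12} = B[\pi^{-1}]^d$ has a well-defined minimum coordinate valuation), multiply each $u_j$ by the appropriate power of $\pi \in A[\pi^{-1}]$ so that $u_j \in M$ and $\overline{u_j} \neq 0$ in $(\Frac \frako)^d$; this preserves the $A[\pi^{-1}]$-span. If the $\overline{u_j}$ already span $(\Frac \frako)^d$ over $\Frac \frako$, we extract $d$ of them forming a basis. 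Otherwise the $B$-submodule $N := B \cdot \{u_j\} \subseteq \calF_2$ is a proper rank-$d$ sub-$B$-module, and elementary divisor theory over the DVR $B$ yields a positive invariant $r := \mathrm{length}_B(\calF_2/N)$ together with an $\frako$-linear relation $\sum \alpha_j \overline{u_j} = 0$, normalized so that some $\alpha_{j_0} \in \frako$ is a unit. Lifting the $\alpha_j$ to $\widetilde{\alpha_j} \in A$ (with $\widetilde{\alpha_{j_0}} \in A^\times$, since $A$ is local with residue field $\kappa$), the element $\sum_j \widetilde{\alpha_j} u_j \in M$ reduces to zero modulo $\pi$, so equals $\pi w$ for some $w \in M$. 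Replacing $u_{j_0}$ by $w$ preserves the $A[\pi^{-1}]$-span (via the unit $\widetilde{\alpha_{j_0}}$) while strictly decreasing $r$; after finitely many iterations the reductions span.

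Let $\bv_1, \ldots, \bv_d$ be the $d$ extracted elements. By Nakayama's lemma applied to the local ring $B$, they form a $B$-basis of $\calF_2$, and so are $A$-linearly independent; they generate $H^0(\Frac \frako, \calF)$ by construction, and they generate $\calF_1$ over $A[\pi^{-1}]$ by the preserved $A[\pi^{-1}]$-span combined with the intersection $A[\pi^{-1}] \cap B = A$ from \eqref{eq:ring intersect} (which forces $B$-linear dependencies of the remaining $u_j$ on the $\bv_i$ in $\calF_2$ to be $A[\pi^{-1}]$-linear dependencies in $\calF_1$). Lemma~\ref{L:transfer generators local} then gives that the $\bv_i$ generate $M$ over $A$, so $M \cong A^d$. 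The principal obstacle is the inductive construction: bookkeeping the preservation of the $A[\pi^{-1}]$-span while strictly decreasing $r$, and verifying that the extracted $d$-element subset still generates $\calF_1$ over $A[\pi^{-1}]$ rather than merely a proper submodule.
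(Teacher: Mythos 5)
Your overall skeleton is the right one (and matches the paper's endgame): produce $\bv_1,\dots,\bv_d \in M$ that simultaneously generate $H^0(\Spec A[\pi^{-1}],\calF)$ and $H^0(\Frac\frako,\calF)$, then invoke Lemma~\ref{L:transfer generators local}. The preliminary reductions (that $B$ is a complete discrete valuation ring with residue field $\Frac\frako$, that $M \hookrightarrow \calF_2$, that $M/\pi M \hookrightarrow (\Frac\frako)^d$) are fine. But the two steps you yourself flag as ``the principal obstacle'' are genuine gaps, and the justification you offer for the second one is incorrect.

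First, the claim that replacing $u_{j_0}$ by $w = \pi^{-1}\sum_j \widetilde{\alpha}_j u_j$ strictly decreases $r = \mathrm{length}_B(\calF_2/N)$ fails for an arbitrary choice of relation: since $N' = N + Bw$, the length drops only if $w \notin N$, and when $n > d$ the chosen relation among the $\overline{u_j}$ may lift to a $B$-linear relation among the $u_j$ themselves, forcing $w \in N$. (Example with $d=2$, $\calF_2 = Bf_1 \oplus Bf_2$: take $u_1 = f_2$, $u_2 = \pi f_1 + f_2$, $u_3 = u_2 + \pi u_1$; the relation $\overline{u_3} - \overline{u_2} = 0$ yields $w = u_1 \in N$.) You would need to prove that \emph{some} admissible relation yields $w \notin N$ whenever $r>0$, which is not addressed.

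Second, and more seriously: extracting $d$ of the $u_j$ whose reductions form a $\Frac\frako$-basis does \emph{not} guarantee that they generate $\calF_1$ over $A[\pi^{-1}]$, and the argument via $A[\pi^{-1}] \cap B = A$ does not repair this. Writing a remaining $u_j$ as $\sum_i c_{ij}\bv_i$ with $c_{ij} \in B$ gives no control: the intersection formula only applies once you already know $c_{ij} \in A[\pi^{-1}]$, which is essentially the statement to be proved. Concretely, for $\calF = \calO_Z$ of rank $1$, take $u_1 = 1$ and $u_2 = z$ a lift of a nonzero element of $\frakm$; both reduce to nonzero elements of $\Frac\frako$ (so either is a ``basis'' after reduction, and $u_1 = z^{-1}u_2$ with $z^{-1} \in B$), yet $u_2$ does not generate $A[\pi^{-1}]$. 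This is exactly the difficulty the paper's proof is built to overcome: there the $\bv_i$ are chosen so that their images in $M \otimes_A \kappa$ (not merely in $M \otimes_A \Frac\frako$) are linearly independent --- via the graded module $\Gr M[\pi^{-1}]$ over $\frako((\overline{\pi}))$ and the lattice $T$ in $V$ --- which by Lemma~\ref{L:valuation lattice} upgrades to a basis of $M \otimes_A \frako$, and simultaneously so that they are related to a free generating set of an ambient free module $F_0$ by a matrix with $\det(X) - 1 \in \frakp$; it is this determinant argument, absent from your proposal, that forces the $\bv_i$ to generate $M[\pi^{-1}]$.
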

\begin{proof}
By Lemma~\ref{L:algebraic glueing1 local}(c), $M[\pi^{-1}] = H^0(U, \calF)$ is a projective $A[\pi^{-1}]$-module of rank $d$, so we can find a finite free $A[\pi^{-1}]$-module $F$ and an isomorphism $F \cong M[\pi^{-1}] \oplus P$ for some finite projective $A[\pi^{-1}]$-module $P$. 
By rescaling by a suitably large power of $\pi$, we may exhibit a basis of $F$ consisting of elements whose projections to $M[\pi^{-1}]$ all belong to $M$. This basis then gives rise to an isomorphism $F \cong F_0[\pi^{-1}]$ for $F_0$ the finite free $A$-module on the same basis. View
\[
\Gr M[\pi^{-1}] := \bigoplus_{n \in \ZZ} (M[\pi^{-1}] \cap \pi^n F_0)/(M[\pi^{-1}] \cap \pi^{n+1} F_0)
\]
as a finite projective graded module of rank $d$ over the graded ring
\[
\Gr A[\pi^{-1}] := \bigoplus_{n \in \ZZ} \pi^n A/\pi^{n+1} A \cong \frako (\!( \overline{\pi})\!),
\]
then put
\[
V := (\Gr M[\pi^{-1}]) \otimes_{\frako(\!(\overline{\pi})\!)} \kappa(\!(\overline{\pi})\!).
\]
Note that for the $\pi$-adic topology, the image of $M$ in $\Gr M[\pi^{-1}]$ is both open (because $M$ contains a set of module generators of $M[\pi^{-1}]$) and bounded (because the same holds for the dual bundle).
Consequently, the image $T$ of $M$ in $V$ is a $\kappa \llbracket \overline{\pi} \rrbracket$-sublattice of $V$. Choose $\bv_1,\dots,\bv_d \in M$ whose images in $V$ form a basis of $T$; 
the images of $\bv_1,\dots,\bv_d$ in $M \otimes_A \kappa$ are linearly independent, so
by Lemma~\ref{L:valuation lattice}, $\bv_1,\dots,\bv_d$ project to a basis of $M \otimes_A \frako$.
It follows that $\bv_1,\dots,\bv_d$ also project to a basis of $M \otimes_A A/(\pi^n)$ for each positive integer $n$.

Again by considering the dual bundle, we see that the image of $F_0$ in $M[\pi^{-1}]$ contains $\pi^n M$ for any sufficiently large integer $n$.
Let $\be_1,\dots,\be_m$ be the images in $M$ of the chosen basis of $F_0$;
using the previous paragraph, we can find elements $\be'_1,\dots,\be'_m \in A\bv_1 + \cdots + A\bv_d$ such that $\be'_j = \sum_i X_{ij} \be_i$ for some matrix $X$ over $A$ with
$\det(X) - 1 \in \pi A \subset \frakp$. The matrix $X$ is then invertible, whence $\bv_1,\dots,\bv_d$ generate $M[\pi^{-1}]$. By Lemma~\ref{L:transfer generators local},
$\bv_1,\dots,\bv_d$ generate $M$, necessarily freely.
\end{proof}

\begin{theorem} \label{T:algebraic glueing local}
The pullback functors
$\Vect_{X} \to \Vect_{Y} \to \Vect_{Z}$ are equivalences of categories.
\end{theorem}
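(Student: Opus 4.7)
The plan is to observe that essentially all of the work has been discharged in the preceding lemmas, so this theorem reduces to a clean assembly. Full faithfulness of both functors is already given by parts (a) and (b) of Lemma~\ref{L:algebraic glueing1 local}, so the entire substance of the theorem is essential surjectivity.

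My first step would be to establish essential surjectivity of the composite $\Vect_X \to \Vect_Z$ directly. Given $\calF \in \Vect_Z$ of rank $d$, I set $M := H^0(Z, \calF)$. Lemma~\ref{L:algebraic glueing3 local} then guarantees that $M$ is free of rank $d$ over $A$, so $\tilde{M}$ is a genuine vector bundle on $X$. Lemma~\ref{L:algebraic glueing1 local}(c) identifies the adjunction morphism $\tilde{M}|_Z \to \calF$ as an isomorphism, which produces the desired preimage.

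To split this into essential surjectivity of each individual functor, I would argue in two further steps. First, $\tilde{M}|_Y \in \Vect_Y$ restricts further to $\tilde{M}|_Z \cong \calF$, which takes care of $\Vect_Y \to \Vect_Z$. Second, for $\Vect_X \to \Vect_Y$, I would start with $\calG \in \Vect_Y$, pull it back to $Z$, construct $\tilde{M} \in \Vect_X$ from $\calG|_Z$ as above with $\tilde{M}|_Z \cong \calG|_Z$, and then invoke the full faithfulness of $\Vect_Y \to \Vect_Z$ (Lemma~\ref{L:algebraic glueing1 local}(b)) to promote this isomorphism on $Z$ to an isomorphism $\tilde{M}|_Y \cong \calG$ on $Y$.

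There is no real obstacle remaining at this level; the genuine analytic input has already been packaged into Lemma~\ref{L:algebraic glueing3 local}, where the freeness of $M$ was deduced via the Newton polygon/lattice analysis supplied by Lemma~\ref{L:valuation lattice}. Within the present theorem the only point I would verify carefully is that the rank of $M$ as a free $A$-module really matches the rank $d$ of $\calF$ as a bundle on $Z$, which is exactly what Lemma~\ref{L:algebraic glueing3 local} asserts, and that \eqref{eq:ring intersect} is being applied with the right intersections so that the adjunction in Lemma~\ref{L:algebraic glueing1 local}(c) is genuinely available for a bundle coming from a finite free $A$-module.
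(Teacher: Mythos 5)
Your proof is correct and takes essentially the same route as the paper: both reduce to essential surjectivity of the composite $\Vect_X \to \Vect_Z$ via full faithfulness, then apply Lemma~\ref{L:algebraic glueing3 local} to get a finite free module $M$ and the adjunction isomorphism of Lemma~\ref{L:algebraic glueing1 local} to conclude $\tilde{M}|_Z \cong \calF$. You are slightly more careful than the paper in spelling out the categorical reduction and in citing part (c) (rather than (b)) for the adjunction isomorphism, which is the correct reference.
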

\begin{proof}
By Lemma~\ref{L:algebraic glueing1 local}(a), 
the functors $\Vect_X \to \Vect_Y \to \Vect_Z$ are fully faithful,
so it suffices to check that $\Vect_X \to \Vect_Z$ is essentially surjective.
For $\calF \in \Vect_Z$, by Lemma~\ref{L:algebraic glueing3 local}, $M = H^0(Z, \calF)$ is a finite free $A$-module. By Lemma~\ref{L:algebraic glueing1 local}(c), we have $\tilde{M}|_Z \cong \calF$, proving the claim.
\end{proof}

\section{Adic glueing}
\label{sec:adic glueing}

We next show that vector bundles on $\Spec \bA_{\rinf}$ can be constructed by glueing not just for a Zariski covering, but for a covering in the setting of adic spaces; this result is used in \cite{s-berkeley} as part of the construction of mixed-characteristic local shtukas. In the process, we prove a somewhat more general result. Along the way, we will use results of Buzzard--Verberkmoes \cite{buzzard-verberkmoes}, Mihara \cite{mihara}, and Kedlaya--Liu \cite{kl1}.

We begin by summarizing various definitions from Huber's theory of adic spaces, as described in \cite{huber-book}. See also \cite[Lecture~1]{aws}.
\begin{defn}
We say that a topological ring $A$ is \emph{f-adic} if there exists an open subring $A_0$ of $A$ (called a \emph{ring of definition}) whose induced topology is the adic topology for some finitely generated ideal of $A_0$ (called an \emph{ideal of definition}). 
Such a ring is \emph{Tate} if it contains a topologically nilpotent unit; in certain cases (as in \cite[Lecture~1]{aws}), one may prefer to instead assume only that the topologically nilpotent elements generate the unit ideal, 
but we will not do this here.

We will only need to consider f-adic rings which are complete for their topologies,
which we refer to as \emph{Huber rings}. Beware that this definition is not entirely standard: some authors use the term \emph{Huber ring} as a synonym for \emph{f-adic ring} without the completeness condition.

For $A$ a Huber ring, let $A^\circ$ denote the subring of power-bounded elements of $A$;
we say that $A$ is \emph{uniform} if $A^\circ$ is bounded in $A$. (This implies that $A$ is reduced, but not conversely.)
A \emph{ring of integral elements} of $A$ is a subring of $A^\circ$ which is open and integrally closed in $A$.

A \emph{Huber pair} is a pair $(A,A^+)$ in which $A$ is a Huber ring and $A^+$ is a ring of integral elements of $A$. To such a pair, we may associate the topological space $\Spa(A,A^+)$ of equivalence classes of continuous valuations on $A$ which are bounded by 1 on $A^+$. This space may be topologized in such a way that 
a neighborhood basis is given by subspaces of the form
\[
\{v \in \Spa(A,A^+): v(f_1),\dots,v(f_n) \leq v(g) \neq 0\}
\]
for some $f_1,\dots,f_n,g \in A$ which generate an open ideal; such spaces are called
\emph{rational subspaces} of $\Spa(A,A^+)$. 
(When $A$ is Tate, every open ideal of $A$ is the unit ideal, and so the condition $v(g) \neq 0$ becomes superfluous.)
For this topology, $\Spa(A,A^+)$ is quasicompact and even a \emph{spectral space} in the sense of Hochster \cite{hochster}.

In addition, Huber defines a \emph{structure presheaf} $\calO$ on $\Spa(A,A^+)$; in the case where
$A$ is Tate and $U$ is the rational subspace defined by some parameters $f_1,\dots,f_n,g$, 
the ring $\calO(U)$ may be identified with the quotient $A \left \langle \frac{f_1}{g}, \dots, \frac{f_n}{g} \right\rangle$ of the Tate algebra $A \langle T_1,\dots,T_n \rangle$ 
by the closure of the ideal $(gT_1 - f_1, \dots, gT_n - f_n)$.

We say that $A$ is \emph{sheafy} if $\calO$ is a sheaf for some choice of $A^+$; with a bit of work
\cite[Remark~1.6.9]{aws}, the same is then true for any $A^+$. For example, by Proposition~\ref{P:buzzard-verberkmoes} below, this holds if $A$ is \emph{stably uniform}, meaning that 
(again for some, and hence any, choice of $A^+$) for every rational subspace $U$ of $\Spa(A,A^+)$,
the ring $\calO(U)$ is uniform.
\end{defn}

\begin{prop} \label{P:adic glueing}
Let $(A,A^+)$ be a Huber pair with $A$ Tate.
\begin{enumerate}
\item[(a)]
Choose $f \in A$ and suppose that
\[
0 \longrightarrow A \longrightarrow A \left\langle f \right\rangle \oplus A \left\langle f^{-1} \right\rangle \stackrel{(x,y) \mapsto x-y}{\longrightarrow} A \left\langle f^{\pm 1} \right\rangle \dashrightarrow 0
\]
is exact without the dashed arrow. (It is then also exact with the dashed arrow; e.g., see \cite[Lemma~1.8.1]{aws}.) Then the functor
\[
\Vect_{\Spec(A)} \to \Vect_{\Spec(A \langle f \rangle)} \times_{\Vect_{\Spec(A \langle f^{\pm 1} \rangle)}}
\Vect_{\Spec(A \langle f^{-1} \rangle)}
\]
is an equivalence of categories.
\item[(b)]
The conclusion of (a) holds whenever $A$ is (Tate and) uniform.
\item[(c)]
If $A$ is (Tate and) sheafy, then the pullback functor $\Vect_{\Spec(A)} \to \Vect_{\Spa(A,A^+)}$
is an equivalence of categories, with quasi-inverse given by the global sections functor.
\end{enumerate}
\end{prop}
\begin{proof}
For (a), see \cite[Lemma~1.9.12]{aws}.
For (b), see \cite[Corollary~2.8.9]{kl1} or \cite[Lemma~1.7.3, Lemma~1.8.1]{aws}.
For (c), see \cite[Theorem~2.7.7]{kl1} or \cite[Theorem~1.4.2]{aws}.
\end{proof}

Using Proposition~\ref{P:adic glueing}(a,b), one can deduce the following. However, we give references in lieu of a detailed argument.

\begin{prop}[Buzzard--Verberkmoes, Mihara] \label{P:buzzard-verberkmoes}
Any stably uniform Huber ring is sheafy.
\end{prop}
\begin{proof}
The original (independent) references are \cite[Theorem~7]{buzzard-verberkmoes} and \cite[Theorem~4.9]{mihara}. See also \cite[Theorem~2.8.10]{kl1}
or \cite[Theorem~1.2.13]{aws}.
\end{proof}

With these results in mind, we set some more specific notation.
\begin{hypothesis} \label{H:Huber ring}
For the remainder of \S\ref{sec:adic glueing}, let $R$ be a Huber ring which is perfect of characteristic $p$ and Tate, and let $R^+$ be a subring of integral elements in $R$
(which is necessarily also perfect). For example, we may take $R = K$, $R^+ = \frako_K$ in case $K$ is complete for a rank 1 valuation. 
Let $\overline{x} \in R$ be a topologically nilpotent unit; note that necessarily $\overline{x} \in R^+$.
\end{hypothesis}

For the geometric meaning of the following definition, see the proof of Theorem~\ref{T:algebraic to adic glueing}.
\begin{defn} \label{D:mess of rings}
Topologize 
\[
A_1 := W(R^+)[p^{-1}], 
\,
A_2 := W(R^+)[[\overline{x}]^{-1}],
\,
A_{12} := W(R^+)[(p[\overline{x}])^{-1}]
\]
as Huber rings with ring of definition $W(R^+)$ and ideals of definition generated by the respective topologically nilpotent units $p, [\overline{x}], p[\overline{x}]$. Then put
\[
B_1 := A_1 \left\langle \frac{[\overline{x}]}{p} \right\rangle,
\,
B_2 := A_2 \left\langle \frac{p}{[\overline{x}]} \right\rangle,
\,
B_{12} := A_{12} \left\langle \frac{[\overline{x}]}{p}, \frac{p}{[\overline{x}]} \right\rangle;
\]
note that there are canonical isomorphisms of topological rings
\[
B_{12} \cong B_1\left\langle \frac{p}{[\overline{x}]} \right\rangle
\cong
B_2 \left \langle \frac{[\overline{x}]}{p} \right\rangle.
\]
Also put
\[
B'_1 := A_2 \left \langle \frac{[\overline{x}]}{p} 
\right\rangle,
\,
B'_2 := A_1 \left \langle \frac{p}{[\overline{x}]} \right\rangle;
\]
note that there are canonical isomorphisms of underlying rings
\[
B'_1 \cong B_1 [[\overline{x}]^{-1}], \, B'_2 \cong B_2[p^{-1}]
\]
but these are not homeomorphisms for the implied topologies.
For example, in the first isomorphism, the rings of power-bounded elements coincide, but on this common subring the induced topology from $B'_1$
is the $\frac{[\overline{x}]}{p}$-adic topology while the induced topology from $B_1 [[\overline{x}]^{-1}]$
is the $p$-adic topology.
\end{defn}

\begin{prop} \label{P:all sheafy}
The following statements hold.
\begin{enumerate}
\item[(a)]
The Huber rings $C = A_1, A_{12}, B_1, B_2, B_{12}, B'_2$ are stably uniform, and hence sheafy
by Proposition~\ref{P:buzzard-verberkmoes}.
\item[(b)]
The Huber ring $C = A_2$ is uniform. (The same is true for $C = B'_1$, but we will not need this. See also
Remark~\ref{R:not stably uniform}.)
\end{enumerate}
\end{prop}
\begin{proof}
To prove (a), note that for $C = A_1, A_{12}, B_1, B_{12}, B'_2$, 
$p$ is a topologically nilpotent unit in $C$.
In these cases, by \cite[Theorem~5.3.9]{kl1},
taking the completed tensor product over $\ZZ_p$ with $\ZZ_p[p^{p^{-\infty}}]$ yields a perfectoid ring in the sense of \cite{kl1} (which must be a $\QQ_p$-algebra). By splitting from $\ZZ_p[p^{p^{-\infty}}]$ to $\ZZ_p$ using the reduced trace,
we deduce that $C$ is stably uniform; see \cite[Theorem~3.7.4]{kl1} for further details.
For $C = B_2$, $p$ is no longer a unit in $C$ but is still topologically nilpotent,
and a similar argument applies using perfectoid rings in the sense of Fontaine; see \cite[Corollary~4.1.14]{kl2}
or \cite[Lemma~3.1.3]{aws}.

To prove (b), note that $A_2^\circ$ is $p$-adically saturated in $A_2$,
$W(R^\circ)$ is contained in $A_2^\circ$, and the image of $A_2^\circ/(p) \to A_2/(p) \cong R$ is contained in $R^\circ$.
These facts together imply that $A_2^\circ = W(R^\circ)$, which is evidently a
bounded subring of $A_2$.
\end{proof}

\begin{remark} \label{R:not stably uniform}
We believe that $A_2$ is stably uniform, which would then imply that $B'_1$ is stably uniform; but we were unable to prove either of these statements.
One thing we can observe is that if $B'_1$ were known to be stably uniform, then combining the preceding results with
Proposition~\ref{P:adic glueing}(a) and \cite[Theorem~1.2.22]{aws} would imply that $A_2$ is sheafy (and then stably uniform).
\end{remark}

We now obtain a comparison between algebraic and adic vector bundles.
\begin{theorem} \label{T:algebraic to adic glueing}
Put $A := W(R^+)$ and let $X$ (resp.\ $Y$) be the complement in $\Spec A$ (resp.\ $\Spa(A,A)$) of the closed subspace where $p = [\overline{x}] = 0$. 
Then pullback along the morphism $Y \to X$ of locally ringed spaces defines an equivalence of categories
$\Vect_X \to \Vect_Y$.
\end{theorem}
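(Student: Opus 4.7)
The plan is to describe both $\Vect_{X}$ and $\Vect_{Y}$ explicitly as categories of glueing data on a two-set covering, and then to use the intermediate Tate localizations $B'_1, B'_2$ to upgrade adic glueing data to Zariski glueing data. First, $X = \Spec A_1 \cup \Spec A_2$ is an open cover of schemes with pairwise intersection $\Spec A_{12}$, so Zariski descent for finite projective modules identifies $\Vect_{X}$ with the category of triples $(M_1, M_2, \psi)$ where $M_i$ is finite projective over $A_i$ and $\psi \colon M_1 \otimes_{A_1} A_{12} \xrightarrow{\sim} M_2 \otimes_{A_2} A_{12}$. On the adic side, $Y$ is covered by the rational subspaces $\{|[\overline{x}]| \leq |p|\} \cap Y = \Spa(B_1, B_1^+)$ and $\{|p| \leq |[\overline{x}]|\} \cap Y = \Spa(B_2, B_2^+)$ with intersection $\Spa(B_{12}, B_{12}^+)$; by Propositions~\ref{P:all sheafy} and~\ref{P:adic glueing}, $\Vect_{Y}$ is thereby identified with the category of triples $(N_1, N_2, \phi)$ where $N_i$ is finite projective over $B_i$ and $\phi \colon N_1 \otimes_{B_1} B_{12} \xrightarrow{\sim} N_2 \otimes_{B_2} B_{12}$. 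Under these identifications, the pullback functor sends $(M_1, M_2, \psi) \mapsto (M_1 \otimes_{A_1} B_1, M_2 \otimes_{A_2} B_2, \psi \otimes 1)$.

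To construct a quasi-inverse, I observe that $\Spa(A_1, A_1^+)$ is covered by the rational subspaces $\Spa(B_1)$ (where $|[\overline{x}]| \leq |p|$) and $\Spa(B'_2)$ (where $|p| \leq |[\overline{x}]|$), with intersection $\Spa(B_{12})$; symmetrically, $\Spa(A_2, A_2^+)$ is covered by $\Spa(B'_1)$ and $\Spa(B_2)$ with intersection $\Spa(B_{12})$. Given $(N_1, N_2, \phi) \in \Vect_{Y}$, the triple $(N_1, N_2 \otimes_{B_2} B'_2, \phi)$ is a glueing datum for the first covering and, by Proposition~\ref{P:adic glueing} applied to $A_1$ (valid by Proposition~\ref{P:all sheafy}), descends to a finite projective $A_1$-module $M_1$; analogously, $(N_1 \otimes_{B_1} B'_1, N_2, \phi)$ yields a finite projective $A_2$-module $M_2$. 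To glue these over $A_{12}$, I use the rational covering $\Spa(A_{12}, A_{12}^+) = \Spa(B'_1) \cup \Spa(B'_2)$ with intersection $\Spa(B_{12})$: by the identities $B'_1 = B_1[[\overline{x}]^{-1}]$ and $B'_2 = B_2[p^{-1}]$, both $M_1 \otimes_{A_1} A_{12}$ and $M_2 \otimes_{A_2} A_{12}$ restrict to $N_1 \otimes_{B_1} B'_1$ over $\Spa(B'_1)$ and to $N_2 \otimes_{B_2} B'_2$ over $\Spa(B'_2)$ with the same glueing isomorphism (induced by $\phi$) on $\Spa(B_{12})$, so sheafiness of $A_{12}$ yields the desired $\psi$.

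Verifying that the two functors are mutually inverse then reduces to observing that the pullback data and the constructed glueing data literally agree on each rational subspace. The principal technical obstacle, already addressed by Proposition~\ref{P:all sheafy}, is the simultaneous sheafiness of all eight $f$-adic rings in play; the subtle case is $A_2$, for which $p$ is not topologically nilpotent and the standard perfectoid reduction does not apply. Once sheafiness is granted, the remainder is essentially a bookkeeping exercise matching up the rational subspaces inside the various adic spectra.
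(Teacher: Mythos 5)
Your proposal is correct and follows essentially the same route as the paper: the same four coverings by the rational subspaces attached to $B_1, B_2, B_{12}, B'_1, B'_2$, the same reliance on Propositions~\ref{P:all sheafy} and~\ref{P:adic glueing} to convert adic bundles into finite projective modules, and the same reassembly of $(N_1, N_2, \phi)$ into modules over $A_1$, $A_2$, and $A_{12}$. Your write-up merely makes the descent-data bookkeeping more explicit than the paper's.
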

\begin{proof}
For $A_1, A_2, A_{12}, B_1, B_2, B_{12}, B'_1, B'_2$ as in Definition~\ref{D:mess of rings},
we have the following coverings of adic spaces by rational subspaces.
\[
\begin{array}{c|c|c|c}
U \cup V & U & V & U \cap V \\
\hline
Y & \Spa(B_1, B_1^\circ) & \Spa(B_2, B_2^\circ) & \Spa(B_{12}, B_{12}^\circ) \\
\Spa(A_1, A_1^\circ) & \Spa(B_1, B_1^\circ) & \Spa(B'_2, B_2^{\prime \circ}) &
\Spa(B_{12}, B_{12}^\circ) \\
\Spa(A_2, A_{2}^\circ) & \Spa(B'_1, B_1^{\prime \circ}) & \Spa(B_2, B_2^{\circ}) &
\Spa(B_{12}, B_{12}^\circ) \\
\Spa(A_{12}, A_{12}^\circ)  & \Spa(B'_1, B_1^{\prime \circ}) & \Spa(B'_2, B_2^{\prime \circ}) & \Spa(B_{12}, B_{12}^\circ)
\end{array}
\]
For $i \in \{1, 2, 12\}$, we may apply Proposition~\ref{P:adic glueing}(c) and Proposition~\ref{P:all sheafy}(a) to see that
the pullback functor $\Vect_{\Spec(B_i)} \to \Vect_{\Spa(B_i, B_i^\circ)}$ is an equivalence.
We may also apply Proposition~\ref{P:adic glueing}(a,b) and Proposition~\ref{P:all sheafy}(b) to obtain
an equivalence
\[
 \Vect_{\Spec(A_i)} \to \Vect_{\Spec(B_1^{?})} \times_{\Vect_{\Spec(B_{12})}}
\Vect_{\Spec(B_2^?)}, \qquad
B_j^? = \begin{cases} B_j & j \in i \\ B_j' & j \notin i; \end{cases}
\]
using the fact that $A_j \to B'_j$ factors through $B_j$ (at the level of rings without topology), it follows that
\[
\Vect_{\Spec(A_1)} \times_{\Vect_{\Spec(A_{12})}}
\Vect_{\Spec(A_2)}
\to
\Vect_{\Spec(B_1)} \times_{\Vect_{\Spec(B_{12})}}
\Vect_{\Spec(B_2)}
\]
is an equivalence.
In the 2-commutative diagram
\[
\xymatrix@C30pt{
\Vect_X \ar[r] \ar@{=}[d] & \Vect_Y \ar@{=}[dd] \\
\Vect_{\Spec(A_1)} \times_{\Vect_{\Spec(A_{12})}} \Vect_{\Spec(A_2)} \ar[d] & \\
\Vect_{\Spec(B_1)} \times_{\Vect_{\Spec(B_{12})}} \Vect_{\Spec(B_2)} \ar[r] &
\Vect_{\Spa(B_1,B_1^\circ)} \times_{\Vect_{\Spa(B_{12},B_{12}^\circ)}} \Vect_{\Spa(B_2,B_2^\circ)}
}
\]
 every arrow except $\Vect_X \to \Vect_Y$ is now known to be an equivalence; we thus obtain the desired result.
\end{proof}

As a corollary, we obtain the following theorem.

\begin{theorem} \label{T:adic glueing local}
Let $v_0$ be the valuation on $W(\frako_K)$ induced by the trivial valuation on the residue field of $\frako_K$.
Put $A := W(\frako_K)$, $X := \Spa(A,A)$, $Y := X \setminus \{v_0\}$. 
Let $\Mod^{\ff}_A$ be the category of finite free $A$-modules. Then the categories 
$\Mod^{\ff}_A, \Vect_X, \Vect_Y$ are equivalent via the functor $\Mod^{\ff}_A \to \Vect_X$ taking $M$ to $\tilde{M}$, the pullback functor $\Vect_X \to \Vect_Y$, and the global sections functor $\Vect_Y \to \Mod^{\ff}_A$. 
\end{theorem}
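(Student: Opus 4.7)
The plan is to combine the two main theorems from the previous sections and handle the single missing point $v_0$ via a direct sheaf-theoretic argument. First, I would apply Theorem~\ref{T:algebraic glueing local} with $\pi = p$, whose hypothesis is satisfied since $A = W(K^+)$ is local, $p$ is a non-zero-divisor, and $A/(p) = K^+$ is a valuation ring; this yields $\Mod^{\ff}_A \cong \Vect_{\Spec A \setminus \{\frakp\}}$ via $M \mapsto \widetilde{M}$. Second, writing $Y^\circ := \Spa(A,A) \setminus V(p,[\overline{x}])$, Theorem~\ref{T:algebraic to adic glueing} gives $\Vect_{\Spec A \setminus V(p,[\overline{x}])} \cong \Vect_{Y^\circ}$. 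Since $v_0 \in V(p,[\overline{x}])$, we have $Y^\circ \subseteq Y \subseteq X$, so composing with restriction produces a chain $\Mod^{\ff}_A \to \Vect_X \to \Vect_Y \to \Vect_{Y^\circ}$, and it suffices to show every functor in this chain is an equivalence.

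For full faithfulness the key observation is that the only rational open of $X = \Spa(A,A)$ containing $v_0$ is $X$ itself. Indeed, in a rational subset $R(f_1,\ldots,f_n/g)$, the condition $v_0 \in R$ forces $v_0(g) \ne 0$, hence $g \notin \frakp$, so $g \in A^\times$; combined with $A^+ = A$, the conditions $v(f_i) \le v(g)$ then hold for every $v \in \Spa(A,A)$. It follows that $\calO_X(X) = \calO_Y(Y) = A$, and an argument parallel to that of Lemma~\ref{L:algebraic glueing1 local}(a) gives full faithfulness for every functor in the cycle. For essential surjectivity, given $\calF \in \Vect_Y$, I would restrict to $Y^\circ$, transfer to an algebraic vector bundle on $\Spec A \setminus V(p,[\overline{x}])$ via Theorem~\ref{T:algebraic to adic glueing}, extend this across the ``extra'' primes in $V(p,[\overline{x}]) \setminus \{\frakp\}$ using iterated versions of Lemmas~\ref{L:transfer generators local}--\ref{L:algebraic glueing3 local}, and then invoke Theorem~\ref{T:algebraic glueing local} to produce a finite free $A$-module $M$. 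The comparison $\widetilde{M}|_Y \cong \calF$ then reduces to checking on rational opens of $X$ contained in $Y$, using the sheafiness of Proposition~\ref{P:all sheafy}.

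The main obstacle is the algebraic extension step when $\frako_K$ has valuation rank greater than one, so that $V(p,[\overline{x}]) \setminus \{\frakp\}$ is genuinely non-empty. Each such prime $\frakq$ corresponds to a localization $A_\frakq$ to which the hypothesis of Section~\ref{sec:algebraic glueing local} still applies with $\pi = p$ (since localizations of valuation rings at primes remain valuation rings), so Theorem~\ref{T:algebraic glueing local} provides a local extension at $\frakq$; the bookkeeping needed to glue these local extensions canonically into a single bundle on $\Spec A \setminus \{\frakp\}$, together with the compatibility check at each point of $V(p,[\overline{x}]) \cap Y$ (including any non-trivial valuations on the residue field $\kappa$ that may lie in $Y$ but not in $Y^\circ$), is the most delicate part. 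In the simplest case where $\frako_K$ is rank-1 with trivially valued residue field, $V(p,[\overline{x}])$ in the adic spectrum reduces to $\{v_0\}$, so $Y = Y^\circ$ and the theorem follows at once from Theorems~\ref{T:algebraic glueing local} and~\ref{T:algebraic to adic glueing}.
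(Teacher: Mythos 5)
Your proposal follows exactly the route of the paper, whose entire proof of this theorem is the single sentence ``Combine Theorem~\ref{T:algebraic glueing local} with Theorem~\ref{T:algebraic to adic glueing}''; so at the level of strategy there is nothing to compare. What you have added is the one point that this combination actually requires checking, namely that the space $Y = X \setminus \{v_0\}$ of Theorem~\ref{T:adic glueing local} agrees with the space $Y$ of Theorem~\ref{T:algebraic to adic glueing} (the complement of the locus $p = [\overline{x}] = 0$), and likewise that $\Spec A \setminus \{\frakp\}$ agrees with the complement of $V(p,[\overline{x}])$ in $\Spec A$. Your verification in the rank-one case is correct: since $\overline{x}$ is a topologically nilpotent unit, the only prime of $K^+$ containing $\overline{x}$ is $\frakm_K$, so $V(p,[\overline{x}])$ has $\frakp$ as its only point in $\Spec A$; and since $A$ is local with $A^+ = A$, every nonzero element of the residue field lifts to a unit, so the only continuous valuation supported at $\frakp$ and bounded by $1$ on $A$ is the trivial one $v_0$. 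Hence the two theorems splice together with no further work, and your observation that the only rational subset of $\Spa(A,A)$ containing $v_0$ is the whole space (because $v_0(g) \neq 0$ forces $g \in A^\times$) is a correct and useful supplement for full faithfulness. The only caveat is your third paragraph: when $\frako_K$ has rank greater than one, $V(p,[\overline{x}])$ genuinely contains points besides $v_0$ (valuations supported at intermediate primes $\frakq$ of $K^+$ containing $\overline{x}$), and your proposed extension across these points is a sketch rather than an argument --- the glueing of the local extensions at the various $\frakq$ and the compatibility at the extra adic points are asserted but not carried out. This is a real incompleteness in your write-up, but it is precisely the point that the paper's own one-line proof also elides; in the situation the paper cares about ($K$ a perfectoid field of characteristic $p$, hence rank one), your argument is complete and matches the intended one.
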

\begin{proof}
Combine Theorem~\ref{T:algebraic glueing local}
with Theorem~\ref{T:algebraic to adic glueing}.
\end{proof}

One might like to parlay Theorem~\ref{T:adic glueing local} into a version with $K$ replaced by $R$. However, one runs into an obvious difficulty in light of the following standard example in the category of schemes.
\begin{example} \label{exa:S2 module}
Let $k$ be a field, put $S := k[x,y,z]$, and let $M$ be the $S$-module
\[
\ker(S^3 \to S: (a,b,c) \mapsto ax+by+cz).
\]
Put $X := \Spec S$, $Y := X \setminus \{(x,y,z)\}$,
$Z := X \setminus \overline{\{(x,y)\}}$; then $\tilde{M} \notin \Vect_X$
but $\tilde{M}|_* \in \Vect_*$ for $* \in \{Y,Z\}$.
Since $X \setminus Y$ has codimension 3 in $X$ and $Y\setminus Z$ has codimension 2 in $Y$, $\tilde{M}|_Z$ has a unique extension to an $S_2$ sheaf (in the sense of Serre)
on either $X$ or $Y$,
namely $\tilde{M}$ itself. In particular, $\tilde{M}$ does not lift from $\Vect_Z$ to $\Vect_X$.
\end{example}

With a bit of care, this argument can be translated into an example that shows that Theorem~\ref{T:adic glueing local} indeed fails to generalize to the case where $K$ is replaced by $R$.

\begin{remark} \label{R:no algebraic glueing}
For $(R,R^+)$ as in Hypothesis~\ref{H:Huber ring}, 
let $\frakp$ be the radical of the ideal $(p, [\overline{x}])$;
it is generated by $p$ and $[\overline{x}]^{p^{-n}}$ for all $n$.
Put
\[
X := \Spec(W(R^+)), \qquad
Y := X \setminus \{\frakp\},
\]
and let $Z$ be the algebraic stack which is the colimit of the diagram
\[
\Spec(W(R^+)[p^{-1}]) \leftarrow \Spec(W(R)[p^{-1}]) \to \Spec(W(R)).
\]
As in Lemma~\ref{L:algebraic glueing1 local}, we see that the functors
$\Vect_X \to \Vect_Y, \Vect_Y \to \Vect_Z$ are fully faithful, and that for $* \in \{Y,Z\}$, $\calF \in \Vect_*$, $M = H^0(*, \calF)$, the adjunction morphism $\tilde{M}|_* \to \calF$ is an isomorphism. However, one may emulate Example~\ref{exa:S2 module} so as to produce an object of $\Vect_Y$  and $\Vect_Z$ which does not lift to $\Vect_X$;
see Example~\ref{exa:no algebraic glueing} below.
\end{remark}

\begin{lemma} \label{L:reflexive}
With notation as in Remark~\ref{R:no algebraic glueing}, for $\calF \in \Vect_*$ and $M = H^0(*, \calF)$, the natural homomorphism
$M^\vee \to H^0(*, \calF^{\vee})$ is an isomorphism. Consequently, the map $M \to M^{\vee \vee}$ is an isomorphism, i.e., $M$ is reflexive.
\end{lemma}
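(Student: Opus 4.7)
My plan is to realize both $M = H^0(*, \calF)$ and $H^0(*, \calF^\vee)$ as fiber products of modules over the constituent affine pieces of $*$, and then to compare them directly. I focus on the harder case $*=Z$; the case $*=Y$ proceeds in parallel, with the obvious Mayer-Vietoris covering of $Y$ replacing the colimit description of $Z$. Write $A := W(R^+)$ and $B := W(R)$; since Witt vectors commute with the filtered colimit presenting $R$ as the localization of $R^+$ at $\overline{x}$, we have $B = A[[\overline{x}]^{-1}]$, so $A \to B$ is a flat localization. A vector bundle $\calF \in \Vect_Z$ is encoded by a triple $(N_1, N_2, \iota)$ with $N_1$ finite projective over $A[p^{-1}]$, $N_2$ finite projective over $B$, and $\iota$ an isomorphism of their base extensions to $B[p^{-1}]$; let $N_{12}$ denote this common base extension. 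Then $M = N_1 \times_{N_{12}} N_2$ as an $A$-module, and likewise $H^0(Z, \calF^\vee) = N_1^\vee \times_{N_{12}^\vee} N_2^\vee$ with each dual taken over the relevant ring.

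The first step is the flat-base-change computation $M[p^{-1}] = N_1$ and $M \otimes_A B = N_2$. Since localization is flat, it commutes with the fiber product defining $M$; using that $N_1, N_{12}$ already have $p$ inverted while $N_2, N_{12}$ already have $[\overline{x}]$ inverted, these computations collapse to $N_1 \times_{N_{12}} N_{12} = N_1$ and $N_{12} \times_{N_{12}} N_2 = N_2$, respectively. This lets me construct the comparison map: a homomorphism $\phi \in M^\vee = \Hom_A(M, A)$ base-changes to $\phi_1 \colon N_1 \to A[p^{-1}]$ and $\phi_2 \colon N_2 \to B$ which visibly agree on $N_{12}$, assembling into an element of $H^0(Z, \calF^\vee)$; this agrees with the natural homomorphism of the lemma under the identification $\tilde{M}|_Z \cong \calF$. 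Conversely, a compatible pair $(\phi_1, \phi_2)$ sends $(n_1, n_2) \in M$ to the common value $\phi_1(n_1) = \phi_2(n_2) \in B[p^{-1}]$, which lies in $A[p^{-1}] \cap B = A$ by the intersection identity already used in the remark (the analog of \eqref{eq:ring intersect}); this produces the inverse.

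For the reflexivity assertion, I apply the first part of the lemma to both $\calF$ and $\calF^\vee$. The remark guarantees $\widetilde{H^0(*, \calF^\vee)}|_* \cong \calF^\vee$, so the hypotheses are met; composing the two resulting isomorphisms yields
\[
M^{\vee\vee} \cong H^0(*, \calF^\vee)^\vee \cong H^0(*, \calF^{\vee\vee}) = H^0(*, \calF) = M,
\]
and functoriality in $\calF$ with respect to the canonical biduality $\calF \to \calF^{\vee\vee}$ identifies this composite with the evaluation map $M \to M^{\vee\vee}$. The main obstacle I anticipate is the flat-base-change identification $M \otimes_A B = N_2$ in Step 1; the remaining bookkeeping is formal once this is in hand.
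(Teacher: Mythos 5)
Your overall strategy coincides with the paper's: the forward map is induced by the adjunction isomorphism of Remark~\ref{R:no algebraic glueing}, the inverse is produced by observing that a compatible pair of functionals takes values in $W(R^+)[p^{-1}] \cap W(R) = W(R^+)$, and the reflexivity statement follows by applying the first part to both $\calF$ and $\calF^\vee$. This is exactly the paper's argument.

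There is, however, a genuine error in your Step 1: $W(R)$ is \emph{not} equal to $W(R^+)[[\overline{x}]^{-1}]$, because the Witt vector functor on perfect rings does not commute with filtered colimits (the $p$-adic completeness built into $W$ is destroyed by the colimit). Concretely, writing an element of $W(R)$ as $\sum_n p^n [r_n]$ with $r_n = s_n \overline{x}^{-k_n}$, $s_n \in R^+$, this element lies in $W(R^+)[[\overline{x}]^{-1}]$ only if the exponents $k_n$ can be chosen bounded. This is not a cosmetic issue: Remark~\ref{R:no algebraic glueing} deliberately distinguishes $Y = \Spec W(R^+)[[\overline{x}]^{-1}]$ from the colimit $Z$ built out of $W(R)$, and that distinction would be vacuous if your identification held. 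Consequently your assertions that $A \to B$ is a flat localization and that $M \otimes_A B = N_2$ follows by commuting a localization past the fiber product are unjustified as written. The repair is immediate: the identities $M[p^{-1}] \cong N_1$ and $M \otimes_A W(R) \cong N_2$ are precisely the content of the statement, already recorded in Remark~\ref{R:no algebraic glueing}, that the adjunction morphism $\tilde{M}|_Z \to \calF$ is an isomorphism. Citing that fact in place of your localization argument, the remainder of your proof goes through and matches the paper's.
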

\begin{proof}
From Remark~\ref{R:no algebraic glueing}, we see that the map is injective. To check surjectivity, note that any $f \in H^0(*, \calF^{\vee})$ restricts to maps $M \to W(R^+)[p^{-1}]$,
$M \to W(R)$ which induce the same map $M \to W(R)[p^{-1}]$. We again deduce the claim from the equality
$W(R^+)[p^{-1}] \cap W(R) = W(R^+)$.
\end{proof}

\begin{remark} \label{R:regular sequence}
Recall that for any ring $S$, a \emph{regular sequence} in $S$ is a finite sequence $s_1,\dots,s_k$ such that for $i=1,\dots,k$, $s_i$ is not a zero-divisor in $S/(s_1,\dots,s_{i-1})$. If $s_1,\dots,s_k$ is a regular sequence in $S$, one computes easily that
\[
\Tor_k^S(S/(s_1,\dots,s_k), S/(s_1,\dots,s_k)) \cong S/(s_1,\dots,s_k) \neq 0;
\]
in particular,  $S/(s_1,\dots,s_k)$ has projective dimension at least (and in fact exactly) $k$ as an $S$-module.
\end{remark}

\begin{example} \label{exa:no algebraic glueing}
Let $k$ be a perfect field of characteristic $p$.
Let $R^+$ be the $(\overline{y}, \overline{z})$-adic completion of the 
perfect closure of $k\llbracket \overline{y}, \overline{z} \rrbracket$.
Put $\overline{x} := \overline{y} \overline{z} \in R^+$.
This notation is consistent with Hypothesis~\ref{H:Huber ring},
so we may adopt notation as in Remark~\ref{R:no algebraic glueing}.

Put $I := ([\overline{y}], [\overline{z}], p)W(R^+)$; note that the generators of $I$ form a regular sequence.
By Remark~\ref{R:regular sequence}, $W(R^+)/I$ has projective dimension at least 3, $I$ has projective dimension at least 2, and
\[
M := \ker(W(R^+)^3 \to I: (a,b,c) \mapsto a[\overline{y}] + b[\overline{z}] + cp)
\]
has projective dimension at least 1. In particular, $M$ is not projective.

For $* \in \{Y,Z\}$, the sequence
\begin{equation} \label{eq:bad exact}
0 \to \tilde{M}|_* \to \calO^{\oplus 3} \to \calO \to 0
\end{equation}
of sheaves is exact, so $\tilde{M}|_* \in \Vect_*$. 
Because $H^0(*, \calO) = W(R^+)$, applying the functor
$H^0(*, \bullet)$ to \eqref{eq:bad exact} yields an isomorphism $H^0(*, \tilde{M}|_*) \cong M$.

However, if $\tilde{M}|_*$ could be extended to an object $\calF \in \Vect_X$, 
we would have $\calF \cong \tilde{N}$ for some finite projective $W(R^+)$-module $N$, and per Remark~\ref{R:no algebraic glueing} we would have $N \cong H^0(*, \calF) = H^0(*, \tilde{M}|_*) \cong M$. This yields a contradiction.
\end{example}

\end{document}